\theoremstyle{plain}
\newtheorem{thm}{Theorem}[section]
\newtheorem{theorem}[thm]{Theorem}
\newtheorem{proposition}[thm]{Proposition}
\newtheorem{corollary}[thm]{Corollary}
\newtheorem{conjecture}[thm]{Conjecture}
\theoremstyle{definition}
\newtheorem{remark}[thm]{Remark}
\newtheorem{example}[thm]{Example}
\newtheorem{question}[thm]{Question}
\newtheorem{assumption}[thm]{Assumption}
\newtheorem{thevarthm}[thm]{\varthmname}
\newenvironment{varthm*}[1]{\trivlist\item[]{\bf #1.}\it}{\endtrivlist}
\newtheorem{custom}{{\bf{Main Theorem}}}
\newcommand\pp[1]{\todo[color=cyan!50]{#1}} 
\renewcommand\ge{\geqslant}
\renewcommand\geq{\geqslant}
\renewcommand\le{\leqslant}
\renewcommand\leq{\leqslant}
\newcommand\CL{\mathcal{CL}}
\newcommand\oCL{\overline{\mathcal{CL}}}
\newcommand\be{\begin{eqnarray*}}
\newcommand\ee{\end{eqnarray*}}
\newcommand\bZ{\mathbb Z}
\renewcommand\pp{\mathbb P}
\newcommand\newop[2]{\def#1{\mathop{\rm #2}\nolimits}}
\newop\log{log}
\newop\ord{ord}
\newop\Gal{Gal}
\newop\SL{SL}
\newop\Bl{Bl}
\newop\mult{mult}
\newop\mass{mass}
\newop\div{div}
\newop\codim{codim}
\newop\sing{sing}
\newop\vdim{vdim}
\newop\edim{edim}
\newop\Ass{Ass}
\newop\size{size}
\newop\reg{reg}
\newop\satdeg{satdeg}
\newop\supp{supp}
\newop\Neg{Neg}
\newop\Nef{Nef}
\newop\Nefh{Nef_H}
\newop\Eff{Eff}
\newop\Zar{Zar}
\newop\MB{MB}
\newop\MBxC{MB\mathit{(x,C)}}
\newop\NnB{NnB}
\newop\Bigg{Big}
\newop\Effbar{\overline{\Eff}}
\newcommand{\Z}{\mathbb Z}
\newcommand{\C}{\mathbb C}
\def\keywordname{{\bfseries Keywords}}%
\def\keywords#1{\par\addvspace\medskipamount{\rightskip=0pt plus1cm
\def\and{\ifhmode\unskip\nobreak\fi\ $\cdot$
}\noindent\keywordname\enspace\ignorespaces#1\par}}
\def\subclassname{{\bfseries Mathematics Subject Classification
(2020)}\enspace}
\def\subclass#1{\par\addvspace\medskipamount{\rightskip=0pt plus1cm
\def\and{\ifhmode\unskip\nobreak\fi\ $\cdot$
}\noindent\subclassname\ignorespaces#1\par}}
\begin{document}
\title{Bounds for characteristic numbers of  conic--line arrangements in the plane}

\author{Rita Pardini and Piotr Pokora}
\date{\today}
\begin{abstract}
The main aim of the note is to provide an upper--bound for the  characteristic number of conic--line arrangements with ordinary singularities in the complex projective plane.
\subclass{14C20, 14N25, 14J29}
\keywords{conic--line arrangements, logarithmic Chern classes, ordinary singularities, abelian covers}
\end{abstract}
\maketitle
\setcounter{tocdepth}{1}
 
\tableofcontents

\section{Introduction}
The main aim of the present note is to answer a question regarding log--surfaces that are defined as the complement in $\pp^2_{\mathbb C}$ of a conic--line arrangements with ordinary singularities. This is a very classical problem in the theory of algebraic surfaces and it dates back (at least) to work due to Iitaka \cite{Iitaka}. 
An arrangement of curves $\mathcal{D} = \{D_{1}, \dots, D_{m}\} \subset \mathbb{P}^{2}_{\mathbb{C}}$ is a collection of smooth projective curves with $m\geq 3$; 
$\mathcal{D}$ is said to be simple crossing if any two of the $D_i$  intersect transversally. For $2\leq k \leq m-1$, a $k$--point is a point in $\mathcal{D}$ which belongs to exactly $k$ curves. Following \cite{BHH87}, the number of $k$--points will be denoted by $t_{k}$.

Let $\mathcal{D} = \{D_{1}, \ldots, D_{m}\} \subset \mathbb{P}^{2}_{\mathbb{C}}$ be a simple crossing arrangement. Consider the blow--up $\tau \colon Y \rightarrow \mathbb{P}^{2}_{\mathbb{C}}$ of the complex plane at all the $k$--points of $\mathcal{D}$ with $k\geq 3$, and denote by $\overline{\mathcal{D}}$ the reduced total transform of $\mathcal{D}$ under $\tau$, which is a simple normal crossings divisor. The logarithmic Chern numbers $\bar{c}_{1}^{2}(Y,\overline{\mathcal{D}})$, $\bar{c}_{2}(Y,\overline{\mathcal{D}})$ of the  pair $(Y,\overline{\mathcal{D}})$ are defined as the Chern numbers of the vector bundle $\Omega^1_Y(\log \overline{\mathcal{D}})$ and 
they can be computed in terms of the numerical invariants of $\mathcal{D}$, as follows (see \cite[p. 27]{Urzua}):
\begin{gather}\label{eq: chern-general}
\bar{c}_{1}^{2}(Y,\overline{\mathcal{D}}) = 9 - \sum_{i=1}^{m}D_{i}^{2} + 4\sum_{i=1}^{m}(g(D_{i})-1) + \sum_{r\geq 2}(3r-4)t_{r},\\
\bar{c}_{2}(Y,\overline{\mathcal{D}}) = 3 + 2\sum_{i=1}^{m}(g(D_{i})-1) + \sum_{r\geq 2}(r-1)t_{r}.\nonumber 
\end{gather}
Following Hirzebruch's paper \cite{Hirzebruch}, we define the characteristic number of $(Y,\overline{\mathcal{D}})$ (aka the log-Chern slope of the log surface $(Y,\overline{\mathcal{D}})$) as
$$\gamma(\mathcal{D}) := \frac{\bar{c}_{1}^{2}(Y,\overline{\mathcal{D}})}{\bar{c}_{2}(Y,\overline{\mathcal{D}})}.$$
It is well--known by \cite{Miyaoka77,Wahl} that $\gamma(\mathcal{D}) \leq 3$, but it is natural to wonder if we can get a sharper upper bound on the slope when considering a particular class of plane curves.

If we restrict our attention to line arrangements $\mathcal{L} \subset \mathbb{P}^{2}_{\mathbb{C}}$, then by a result due to Sommese \cite[(5.3) Theorem]{Som} one has
$$\gamma(\mathcal{L}) \leq \frac{8}{3},$$
and we get  equality if and only if $\mathcal{L}$ is the dual Hesse arrangement consisting of $m=9$ lines and with $t_{3}=12$. If we now look at the arrangements of smooth conics $\mathcal{C} \subset \mathbb{P}^{2}_{\mathbb{C}}$ with only ordinary singularities, then the second author showed in \cite{Pokora1} that
$$\gamma(\mathcal{C}) < \frac{8}{3},$$
but we do not know much about conic arrangements with characteristic numbers close to $8/3$. In his Ph.D. thesis \cite[Section 2.5.1]{Urzua}, G. Urz\'ua asked a question that can be formulated as follows.
\begin{question}
Let $\mathcal{CL}\subset \mathbb{P}^{2}_{\mathbb{C}}$ be an arrangement consisting of $d$ lines and $k$ smooth conics such that $t_{d+k}=0$ and all the singularities of $\mathcal{CL}$ are \textbf{ordinary}. Is it true that
$$\gamma(\mathcal{CL}) \leq \frac{8}{3}$$
always holds?
\end{question}
The main result of this note provides a positive,  sharper, answer to the above question under the assumption that $k\ge 3$.

\begin{custom}[see Theorem \ref{charn}]
Let $\mathcal{CL} \subset \mathbb{P}^{2}_{\mathbb{C}}$ be an arrangement of $d$ lines and $k\geq 3$ smooth conics such that $t_{d+k}=0$. Then $\gamma(\mathcal{CL}) < \frac{8}{3}$.
\end{custom}

Here is the structure of our paper. In Section \ref{sec: abel-prel}, we show how to construct, for any prime $p$, a $\mathbb Z_p^{k+d-1}$--cover of $\pp^2_{\mathbb{C}}$ branched precisely on the curves of the given conic--line arrangement. In Section \ref{sec: abcovers}, we prove Hirzebruch-type inequalities for conic--line arrangements with ordinary singularities -- this result allows us to prove in Section \ref{sec:bounds} our Theorem \ref{charn}, which is the main result of our note.

We work over the complex numbers and all varieties are projective.

\section{Preliminaries on abelian covers} \label{sec: abel-prel} 
For the reader's convenience  here we  recall  from \cite[\S~2]{RP} the structure theorem for abelian covers in the special case when the Galois group is $G\cong \mathbb{Z}_{p}^{m}$ for a prime $ p$. 

Consider a $G$-cover 
$f \colon  X \rightarrow Y$ with $Y$ smooth and $X$ a normal variety. The irreducible components of the branch divisor of $f$ are labeled  by pairs of the form $(H, \Psi)$, where $H < G$ is a cyclic subgroup of $G$ and $\Psi$ is a generator of $H^*:={\rm Hom}(H,\mathbb C^*)$,  the group of characters of $H$.
 If we fix  a primitive $p$--th root $\xi$ of $1$, we can use instead the non-zero elements of $G$  to label the components of the branch divisor of $f$, thanks to  the following bijection:
$$ \quad g \in G \setminus \{0\} \quad \iff \quad (H,\Psi) \text{ is a pair as above}$$ 
that sends a non-zero element $g \in G$ to the pair consisting of the subgroup $H$ generated by $g$ and of the character $\Psi$ of $H^*$ such that $\Psi(g)=\xi$.
Similarly, we have the identification:
$$G^{*} = {\rm Hom}(G,\mathbb{C}^{*}) \cong {\rm Hom}(G, \mathbb{Z}_{p}),$$
defined by sending $\psi \in {\rm Hom}(G, \mathbb{Z}_{p})$ to the character 
$$g \mapsto \xi^{\psi(g)}.$$

Finally, given a class $\alpha \in \mathbb{Z}_{p}$, we write $\hat{\alpha}$ for the only representative of $\alpha$ such that $0 \leq \alpha < p$. Then, given a $G$--cover, we have the following
\begin{equation}\label{eq: fundrel}
pL_{\chi} \equiv_{{\rm lin}} \sum_{g\neq 0} \widehat{\chi(g)}D_{g}, \quad \forall \chi \in G^{*},
\end{equation}
where $\equiv_{{\rm lin}}$ denotes linear equivalence of divisors.
Conversely, if $\chi_{1}, \dots, \chi_{m} \in G^{*}$ is a basis and $L_{\chi_1}, \dots, L_{\chi_{m}}$ are line bundles satisfying 
\begin{equation*}
pL_{\chi_{i}} \equiv_{{\rm lin}} \sum_{g\neq 0} \widehat{\chi_{i}(g)}D_{g}.
\end{equation*}
then there exists a $G$--cover $f \colon X \rightarrow Y$ branched along the $D_{g}$'s.  

Let $\mathcal{CL} = \{\ell_{1}, \ldots, \ell_{d}, C_{1}, \ldots, C_{k}\} \subset \mathbb{P}^{2}_{\mathbb{C}}$ be an arrangement consisting of $d$ lines and $k$ smooth conics such that $t_{d+k}=0$. 

Let $p$ be an odd prime. We are going to show the existence of an abelian covering with $G \cong (\mathbb{Z}_{p})^{d+k-1}$ branched along $\mathcal{CL}$. This is an instance of a more general construction described in \cite[\S~2.2]{AP}.
Let $G$ be the group defined by the following exact sequence:
\begin{equation}
\label{seq}
    0 \longrightarrow \mathbb{Z}_{p} \xrightarrow{\quad \delta \quad} \mathbb{Z}_{p}^{d+k} \longrightarrow G \longrightarrow 0 
\end{equation}
with the mapping $\delta$ given by
$$1 \rightarrow (\underbrace{1, \dots, 1}_{k \text{ times}}, \underbrace{2, \dots, 2}_{d \text{ times}}).$$
Denote by $g_{i} \in G$ the image of the $i$--th vector of the canonical basis of $\mathbb{Z}_{p}^{k+d}$. We set
$$D_{g_{i}} = \ell_{i} \,\, \text{ for } \,\, i \in \{1, \dots, d\},$$
$$D_{g_{i}} = C_{i-d} \,\, \text{ for } \,\, i \in \{d+1, \dots, k+d\}$$
and $D_g=0$ for all the remaining $g\in G$.
Dualizing the sequence \eqref{seq}, we obtain:
$$0\longrightarrow G^{*} \longrightarrow \left(\mathbb{Z}_{p}^{d+k}\right)^* \rightarrow \mathbb{Z}_{p} \longrightarrow 0,$$
i.e., the character $\chi = (x_{1}, \dots, x_{k},y_{1}, \dots, y_{d}) \in \left(\mathbb{Z}_{p}^{d+k}\right)^* $ is in $G^{*}$ if and only if $$\sum_{i=1}^{k}x_{i} + 2\sum_{i=1}^{d}y_{i} = 0 \text{ holds in } \mathbb{Z}_{p}.$$
Equation \eqref{eq: fundrel} now reads
$$pL_{\chi} \equiv_{{\rm lin}}\sum_{i=1}^{k} \hat{x_{i}}\ell_{i} + \sum_{i=1}^{d} \hat{y_{i}}C_{i} \equiv_{{\rm lin}} \bigg( \sum_{i}^{k}\hat{x_{i}} + 2\sum_{i=1}^{d}\hat{y_{i}}\bigg)\mathcal{O}_{\mathbb{P}^{2}_{\mathbb{C}}}(1)$$
and it can be solved for any $\chi \in G^{*}$. This shows that there exists a $G$--cover with $G = \mathbb{Z}_p^{d+k-1}$ branched along $\mathcal{CL}$. 

The same construction can be performed also for $p=2$, if $d>1$: the $\bZ_2^{k+d-1}$--cover that one obtains is the fiber product of the Kummer cover with Galois group $\bZ_2^{d-1}$ branched on the lines and of the $k$ double covers branched on $C_{1},\ldots ,C_{k}$, respectively.

\begin{remark}
One can construct $\Z_p^{d+k-1}$--covers of $\pp^2_{\C}$ branched over the components of $\CL$ for any odd integer $p$, not necessarily prime. Here we only treat the case when $p$ is prime because it is sufficient for our purposes and is easier to describe. 
\end{remark}
In the next section we will use these constructions to obtain  Hirzebruch-type inequalities for conic--line arrangements in the complex plane with only ordinary singularities.

\section{Inequalities for  conic-line arrangements via abelian covers}\label{sec: abcovers}

In this section we derive some inequalities for the numerical invariants of conic-line arrangements. The technique, that goes back to Hirzebruch's work on line arrangements \cite{Hirzebruch83}, boils down to constructing singular abelian covers branched on the curves of the arrangement, explicitly computing the desingularization and then applying  to it the Bogomolov--Miyaoka--Yau inequality or the log version due to Miyaoka  \cite{M84}.
Proposition \ref{HirCL} is the key ingredient for the proof of our main result in the next section.

Throughout all the section we make the following
\begin{assumption}
 $\mathcal{CL} = \{\ell_{1}, \ldots, \ell_{d}, C_{1}, \ldots, C_{k}\} \subset \mathbb{P}^{2}_{\mathbb{C}}$ be an arrangement of lines and smooth conics admitting only ordinary singularities and such that $k\geq 3$ and $t_{d+k}=0$. 
\end{assumption}

Let $p$ be a prime. As explained in \S \ref{sec: abel-prel}, there is an abelian cover $f_p \colon  X_p \rightarrow \mathbb{P}^{2}_{\mathbb{C}}$ with Galois group $G\cong \mathbb{Z}_p^{d+k-1}$ branched  on the curves of $\mathcal{CL}$ (if $p=2$ we also assume that $d\ge 3$ or $d=0$, and in the latter case the Galois group is $\Z_2^k$). 

 We use the notation introduced in \S \ref{sec: abel-prel}. Assume first that $p$ is odd. By \cite[Proposition~3.1]{RP} a point $Q \in X$ is singular if and only if $f(Q)$ is a point of $\mathcal{CL}$ of multiplicity $\geq 3$, since any subset of the $g_i$ of cardinality $\le d+k-1$ is independent in $G\cong \mathbb{Z}_p^{d+k-1}$ (notation as in \S \ref{sec: abcovers}). Denote by $\tau \colon Y \rightarrow \mathbb{P}^{2}_{\mathbb{C}}$ the blow--up of the complex projective plane at all singular points of $\mathcal{CL}$ of multiplicity $\geq 3$ and consider the following diagram obtained by taking base change and normalization:
\begin{equation}\label{eq: diag1}
\xymatrix{
W_p\ar[d]^{\sigma_p} \ar[r]^{\rho_p}&X_p\ar[d]^{f_p}\\
Y \ar[r]^{\tau}         & \pp^2  .}
\end{equation}
Now we are going to show that $W_{p}$ is smooth. Let $P\in \pp^2$ be a point of multiplicity $r \geq 3$ of $\mathcal{CL}$ and let $g_{i_1},\dots g_{i_r}\in G$ be the elements corresponding to the curves of the arrangement containing $P$. By the normalization algorithm of \cite[\S~3]{RP}, the exceptional curve $E_P$ of $\tau$ lying over $P$ appears in the branch locus of $\sigma$ with label $g_P:= g_{i_1}+ \dots + g_{i_r}$, unless $g_p=0$. Since $r<d+k$ by the assumption, as already noted above we have $g_P\ne 0$ and so $E_P$ is contained in the branch locus of $\sigma_p$. By the criterion recalled above, $W$ is smooth over $E_P$ since $g_P$ and $g_{i_j}$ are independent for every $j=1, \dots, r$.  Summing up, $W_p$ is a smooth cover of $Y$ branched over $\oCL:=\sum_PE_P+\sum_{i=1}^d\bar \ell_i+\sum_{j=1}^k\bar C_j$, where $E_P$ are the $\tau$--exceptional curves and $\bar \ell_i$, $\bar C_j$, are the strict transforms of $\ell_i$, $C_j$.

Assume now $p=2$; in this case we also assume that either $d=0$ or $d\ge 3$ and there is no point lying on all the lines and on no conic of $\CL$. Under these assumptions all the above claims are still true, except for the fact that if $d=0$ the Galois group is $\Z_2^k$. 

The canonical class of $W_p$ is the pull-back of the $\mathbb Q$--divisor:
\begin{multline}\label{eq: Kp}
K_p=K_Y +\frac{p-1}{p} \bigg( \sum_PE_P+\sum_{i=1}^d\bar \ell_i+\sum_{j=1}^k\bar C_j \bigg) =\\ \tau^*K_{\pp^2}+\frac{2p-1}{p}\sum_PE_P+\frac{p-1}{p}\bigg(\sum_{i=1}^d\bar \ell_i+\sum_{j=1}^k\bar C_j \bigg).
\end{multline}
If $p<q$, then $K_p<K_q$; we  set  $$K:=\lim_pK_p=K_Y+\sum_PE_P+\sum_{i=1}^d\bar \ell_i+\sum_{j=1}^k\bar C_j.$$ Note that $K$  is  the logarithmic canonical divisor $\bar {c_1}(Y, \overline{\mathcal{CL}})$.

We start by studying positivity properties of $K_p$ or, equivalently, of $K_{W_p}$. Since $\mathcal{CL}$ contains conics,  any line $\ell_i$ of $\mathcal{CL}$ meets the rest of the arrangement in at least two points. We say that a line of $\mathcal{CL}$ is {\em exceptional} if it meets the rest of the arrangement at exactly two points.
\begin{example}\label{ex: exceptional}
 Let $P_1,P_2, P_3$ be non--collinear points in $\pp^2$,  let $\ell_i$ be the line joining $P_j$ and $P_k$, where $i,j,k$ is a permutation of 1,2,3, and let $\mathcal{CL}$ be the arrangement $\{\ell_1,\ell_2,\ell_3, C_1,\ldots,C_k\}$ where the $C_j$ are conics containing $P_1,P_2,P_3$. The lines $\ell_1,\ell_2,\ell_3$ are exceptional for $\CL$. Take the Cremona transformation centered at $P_1, P_2,P_3$ and let $\mathcal L$ be the  arrangement consisting of the strict transforms of $C_1,\dots C_k$ and of the exceptional lines corresponding to $P_1,P_2, P_3$. 
Then $\mathcal L$ is a line arrangement  of degree $k+3$ that contains 3 lines that meet the rest of the arrangement only at double points of $\mathcal L$. Note that the Cremona transformation induces an isomorphism of the complements of $\CL$ and $\mathcal L$.

 It is not hard to see that a conic--line arrangement has at most three exceptional lines and that if it has three exceptional lines then it is the arrangement $\CL$ that we have just described.
 \end{example}

\begin{proposition}\label{prop: positivity} Let $p$ be a prime. In the above setting: 
\begin{enumerate} 
\item $K_p$ is effective;
\item $K_p$ is big for $p\ge 3$;
\item $K_p$ is nef iff $\CL$ contains no exceptional line; 
\item $K$ is nef and big.
\end{enumerate}
\end{proposition}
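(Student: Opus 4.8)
The plan is to translate everything into intersection theory on $Y$ and read off the positivity from adjunction. Put $H=\tau^{*}\calo_{\pp^{2}}(1)$; then $\bar\ell_{i}=H-\sum_{P\in\ell_{i}}E_{P}$, $\bar C_{j}=2H-\sum_{P\in C_{j}}E_{P}$ (the sums running over the blown--up points on the curve), $\oCL=(d+2k)H-\sum_{P}(m_{P}-1)E_{P}$, where $m_{P}\ge 3$ is the multiplicity of $\CL$ at $P$, and $K_{p}=K_{Y}+\tfrac{p-1}{p}\oCL=K-\tfrac1p\oCL$. Denoting by $s_{i}$ (resp. $s'_{j}$) the number of points of $\CL$ on $\ell_{i}$ (resp. $C_{j}$), and by $v'_{i}$, $v_{j}$ the number of those that are double points, adjunction on the components of $\oCL$ gives $K_{p}\cdot E_{P}=(m_{P}-2)-\tfrac1p(m_{P}-1)$, $K_{p}\cdot\bar\ell_{i}=(s_{i}-2)-\tfrac1p(1+v'_{i})$ and $K_{p}\cdot\bar C_{j}=(s'_{j}-2)-\tfrac1p(4+v_{j})$; in particular $K\cdot E_{P}=m_{P}-2$, $K\cdot\bar\ell_{i}=s_{i}-2$, $K\cdot\bar C_{j}=s'_{j}-2$. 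The first quantity is always $\ge 0$ (with equality only when $p=2$, $m_{P}=3$). For conics one uses $k\ge 3$: $C_{j}$ meets at least two other conics, each in four distinct points by ordinariness, so two of these quadruples overlap in $\ge 8-s'_{j}$ points of multiplicity $\ge 3$; hence $q_{j}:=s'_{j}-v_{j}\ge 8-s'_{j}$, i.e. $s'_{j}+q_{j}\ge 8$, which yields $K_{p}\cdot\bar C_{j}\ge 0$, and $>0$ for $p\ge 3$. For lines, an exceptional $\ell_{i}$ has $s_{i}=2$, so $K_{p}\cdot\bar\ell_{i}=-\tfrac{1+v'_{i}}{p}<0$ and $K\cdot\bar\ell_{i}=0$; while for $s_{i}\ge 3$ a short combinatorial argument — again using that the $k\ge 3$ conics each meet $\ell_{i}$ in two distinct points, so too few triple points cannot accommodate them, forcing $n_{i}:=s_{i}-v'_{i}$ to be large enough — shows $K_{p}\cdot\bar\ell_{i}\ge 0$. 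Thus $K$ is $\ge 0$ on every component of $\oCL$, strictly on the $E_{P}$ and the $\bar C_{j}$.

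To prove (1), I would exhibit an effective representative of $2K_{2}=2K_{Y}+\oCL\equiv(d+2k-6)H-\sum_{P}(m_{P}-3)E_{P}$; note $d+2k-6\ge 0$ since $k\ge 3$. Removing three of the conics from $\CL$ produces a plane curve of degree $d+2k-6$ whose multiplicity at each $P$ is $m_{P}$ minus the number of deleted conics through $P$, hence at least $m_{P}-3$; its strict transform, plus a nonnegative exceptional correction, is linearly equivalent to $2K_{2}$, so $K_{2}$ is effective. Since $K_{p}=K_{2}+\tfrac{p-2}{2p}\oCL$ and $K=K_{2}+\tfrac12\oCL$, it follows that $K_{p}$ (for every prime $p$) and $K$ are effective.

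For the nef assertions I would use that an effective, a fortiori pseudoeffective, $\Q$--divisor $A$ on a smooth projective surface is nef as soon as $A\cdot C\ge 0$ for every irreducible curve $C$ with $C^{2}<0$: otherwise the negative part $N=\sum a_{i}C_{i}$ of the Zariski decomposition of $A$ would be nonzero, and since $N^{2}<0$ and $P\cdot C_{i}=0$, some component would satisfy $A\cdot C_{i}=N\cdot C_{i}<0$. For $C$ a component of $\oCL$ the computations of the first paragraph apply. For $C$ irreducible with $C^{2}<0$ and $C\not\subset\oCL$, write $C=\tau^{*}C_{0}-\sum_{P}\mu_{P}E_{P}$ with $C_{0}=\tau(C)$ of degree $e\ge 1$ (if $e=0$ then $C=E_{P}\subset\oCL$) and $\mu_{P}=\mult_{P}C_{0}\le e$; using $\sum_{P\in\ell_{i}}\mu_{P}\le e$ and $\sum_{P\in C_{j}}\mu_{P}\le 2e$ one gets $\oCL\cdot C=(d+2k)e-\sum_{P}\mu_{P}(m_{P}-1)\ge\sum_{P}\mu_{P}\ge 2$, the last step because $(\sum\mu_{P})^{2}\ge\sum\mu_{P}^{2}>e^{2}\ge 1$. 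Since $K_{Y}\cdot C\ge -1$ for such $C$, we obtain $K\cdot C\ge 1$ and $K_{p}\cdot C\ge-1+\tfrac{2(p-1)}{p}\ge 0$. Hence $K$ is nef, and $K_{p}$ is nef exactly when $\CL$ has no exceptional line, the ``only if'' being immediate from $K_{p}\cdot\bar\ell_{i}<0$ above. This proves (3) and the nef half of (4).

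For bigness: $K$ is nef, so it is big iff $K^{2}=\bar c_{1}^{2}(Y,\oCL)>0$; and $K^{2}=0$ is impossible, because $2K$ is linearly equivalent to an effective divisor whose support contains $\bar C_{1}$ while $K\cdot\bar C_{1}=s'_{1}-2\ge 2>0$. This gives (4). For (2), since $K_{p}-K_{3}=\tfrac{p-3}{3p}\oCL$ is effective for every prime $p\ge 3$, it suffices to prove $K_{3}$ big; as $K_{3}$ is effective it is enough to show $K_{3}^{2}>0$. Computing $K_{3}^{2}=(K_{Y}+\tfrac23\oCL)^{2}$ and verifying its positivity is the step I expect to be the main obstacle: after substituting the expressions above it reduces to a polynomial inequality in $d$, $k$ and the combinatorial invariants of $\CL$ (using, e.g., the identity $\sum_{r\ge 3}\binom r2 t_{r}=\binom d2+2dk+4\binom k2-t_{2}$), which one checks using $k\ge 3$; this, together with the line--combinatorics invoked for $K_{p}\cdot\bar\ell_{i}$ when $s_{i}\ge 3$, is where the real work lies, everything else being bookkeeping with adjunction and the Zariski decomposition.
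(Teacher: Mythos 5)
Your overall strategy is sound, and parts (1) and (3) essentially work: the effective representative of $2K_{2}$ obtained by deleting three conics is exactly the paper's argument for (i) (the paper writes $K_{2}=\sum_{t}\frac{3-\mu_{t}}{2}E_{t}+\frac12\bigl(\sum_{i}\bar\ell_{i}+\sum_{j\ge 4}\bar C_{j}\bigr)\ge 0$, which is your divisor in disguise), while for nefness you work directly on $Y$ via Zariski decomposition and adjunction, whereas the paper tests $K_{W_p}=\sigma_p^{*}K_p$ against $(-1)$--curves upstairs on the cover; your route is a legitimate alternative. Two caveats: the ``short combinatorial argument'' for $K_{p}\cdot\bar\ell_{i}\ge 0$ when $s_{i}\ge 3$ is not as short as you suggest (for $p=2$ it amounts to $s_{i}+n_{i}\ge 5$, and ruling out the case $s_{i}=3$, $n_{i}=1$ genuinely uses the hypothesis $t_{d+k}=0$, not just $k\ge 3$), and you should check it; the conic estimate $s'_{j}+n'_{j}\ge 8$ is fine.

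The genuine gap is part (2). You reduce bigness of $K_{p}$ ($p\ge 3$) to $K_{3}^{2}>0$ and then explicitly defer the verification to ``a polynomial inequality \dots which one checks using $k\ge 3$''; this is precisely the nontrivial content of the statement and it is not done. In the numbers of the paper, $9K_{3}^{2}=81-56k-32d+16f_{1}-25f_{0}+t_{2}$, and I see no routine combinatorial proof that this is positive for all admissible arrangements --- indeed, inequalities of this shape are exactly what the paper later \emph{deduces} from bigness via Bogomolov--Miyaoka--Yau, so your reduction risks being circular or at least much harder than the statement itself. The paper avoids the issue entirely with a divisor-level bound: writing $K_{3}=\frac16(\bar C_{1}+\bar C_{2}+\bar C_{3})+\sum_{t}\frac{10-3\mu_{t}}{6}E_{t}+\frac23\bigl(\sum_{i}\bar\ell_{i}+\sum_{j\ge4}\bar C_{j}\bigr)$ and using $\bar C_{1}+\bar C_{2}+\bar C_{3}+\sum_{t}\mu_{t}E_{t}=\tau^{*}(C_{1}+C_{2}+C_{3})$ together with $\mu_{t}\le 3$, one gets $K_{3}\ge\frac13\tau^{*}h$, so $K_{3}$ is big (big plus effective), with no intersection-number computation at all. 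You should replace your $K_{3}^{2}>0$ step by this (or an equivalent) argument; once (2) is in place, your derivation of (4) goes through.
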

\begin{proof}
(i) Since $K_p<K_q$ if $p<q$, it is enough to prove the claim for $p=2$. Let  $P_1,\dots, P_m$  be the points of multiplicity $\ge 3$ of $\CL$ and let $\mu_t\le 3$ be the multiplicity of $P_t$ for $C_1+C_2+C_3$, and let $E_t$ be the exceptional curve corresponding to $P_t$, $t=1,\dots, m$.  Then \eqref{eq: Kp} can be rewritten for $p=2$ as 
\begin{gather*}
K_2= -\frac12\tau^*(C_1+C_2+C_3)+\frac 32\sum_{t=1}^m E_t+\frac{1}{2}\bigg(\sum_{i=1}^d\bar \ell_i+\sum_{j=1}^k\bar C_j \bigg)=\\
\sum_{t=1}^m \frac{3-\mu_t}{2} E_t+\frac{1}{2}\bigg(\sum_{i=1}^d\bar \ell_i+\sum_{j=4}^k\bar C_j \bigg) \ge 0
\end{gather*}
\medskip

(ii) Again, it is enough to prove the claim for $p=3$. Arguing as above we have 
\begin{gather*}
K_3=\frac 16 (\bar C_1+\bar C_2+\bar C_3)+\sum_{t=1}^m\frac{10-3\mu_t}{6} E_t+\frac{2}{3}\bigg(\sum_{i=1}^d\bar \ell_i+\sum_{j=4}^k\bar C_j \bigg)\ge \\
\frac {1}{18} \bigg(\sum_{t=1}^s 3 E_t + \bar C_1+\bar C_2+\bar C_3 \bigg)\ge\frac 1 3 \tau^*h,
\end{gather*}
where $h$ is the class of a line in $\pp^2$. Since $\tau^*h$ is big, $K_3$ is also big.
\medskip

(iii) The divisor  $K_{W_p}=\sigma_p^*K_p$ is effective by (i),  and therefore  it is nef if and only if $W_p$ contains no $(-1)$--curve. A $(-1)$--curve of $W_p$ maps to a curve $\Gamma$ of $Y$ with $K_p\Gamma<0$. Since by (i) $K_p$ is effective supported on the components of $\oCL=\sum_PE_P+\sum_{i=1}^d\bar \ell_i+\sum_{j=1}^k\bar C_j$ our $\Gamma$ must be a component of $\CL$.  If $\Gamma$ is the strict transform of an exceptional line $\ell_i$ of $\CL$, then $\Gamma^2=-1$. In fact, all the conics of $\CL$ must contain the points $P_1, P_2$ of intersection of $\ell_i$ with the rest of the arrangement, so $P_1$ and $P_2$ have multiplicity $\ge 4$ for $\CL$. 
By the Hurwitz formula the components of $\sigma_p^*\Gamma$ are rational. Since the stabilizer of $\sigma_p^*\Gamma$ has order $p^2$ by construction and $\Gamma$ is in the branch locus we see that $\sigma_p^*\Gamma$ is a disjoint union of $(-1)$--curves.
The only other case in which a component $\Gamma$ of $\oCL$ pulls back to a union of rational curves of $W_p$ is when $p=2$ and $\Gamma$ meets the rest of the branch locus at three points, namely $\Gamma$ is either the strict transform of a line blown up at three points or it is the exceptional curve over a point of multiplicity $3$ of $\CL$. In the either case, the stabilizer of $\sigma_2^*\Gamma$ has order $8$, therefore the components of $\sigma_2^*\Gamma$ have even self-intersection, equal to $2\Gamma^2$, and are not $(-1)$--curves.

(iv) The divisor $K$ is big by (ii). Since it is effective  supported on the components of $\oCL$, it is enough to check $K\Gamma\ge 0$ for every component $\Gamma$ of $\oCL$. Since $K\Gamma=\lim_p K_p\Gamma$, by the proof of (iii) it is enough to show $K\Gamma\ge 0$ when $\Gamma$ is the strict transform of an exceptional line of $\CL$. In that case a direct computation shows that $K\Gamma=0$.
\end{proof}

The following is an immediate consequence of Proposition \ref{prop: positivity}:

\begin{corollary}\label{cor: positivity} In the above set-up the following hold:
\begin{enumerate} 
\item $W_p$ has non-negative Kodaira dimension;
\item $W_p$ is of general type  for $p\ge 3$;
\item $W_p$ is minimal iff $\CL$ contains no exceptional line; 
\item $(Y,\oCL)$ is of log general type. 
\end{enumerate}
\end{corollary}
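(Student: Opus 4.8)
The plan is to read off each of the four assertions from the corresponding item of Proposition \ref{prop: positivity}, using the identity $K_{W_p}=\sigma_p^*K_p$ established before \eqref{eq: Kp} together with standard facts on the Kodaira dimension of smooth projective surfaces (recall that $W_p$ is smooth and projective, being a finite cover of $Y$).

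For (i) I would argue as follows: by Proposition \ref{prop: positivity}(i) the $\mathbb Q$--divisor $K_p$ is effective, so $K_{W_p}=\sigma_p^*K_p$ is $\mathbb Q$--linearly equivalent to an effective $\mathbb Q$--divisor; hence $NK_{W_p}$ is linearly equivalent to an effective divisor for a suitable integer $N>0$, and therefore $\kappa(W_p)\ge 0$. For (ii), recall that a smooth projective surface is of general type precisely when its canonical class is big, and that bigness is preserved under pullback along the generically finite surjective morphism $\sigma_p$; since $K_p$ is big for $p\ge 3$ by Proposition \ref{prop: positivity}(ii), the class $K_{W_p}=\sigma_p^*K_p$ is big too, so $W_p$ is of general type.

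For (iii) I would use that a smooth projective surface is minimal if and only if it contains no $(-1)$--curve. If $\CL$ has no exceptional line, then $K_p$ is nef by Proposition \ref{prop: positivity}(iii), hence $K_{W_p}=\sigma_p^*K_p$ is nef and meets every curve non-negatively; in particular $W_p$ has no $(-1)$--curve, so it is minimal. Conversely, if $\ell_i$ is an exceptional line of $\CL$, then the computation carried out in the proof of Proposition \ref{prop: positivity}(iii) exhibits $\sigma_p^*\bar\ell_i$ as a disjoint union of $(-1)$--curves, so $W_p$ is not minimal. Finally, for (iv), note that by definition $(Y,\oCL)$ is of log general type exactly when the log canonical class $K_Y+\oCL$ is big (here $\oCL$ is a simple normal crossings divisor, so its log Kodaira dimension equals the Iitaka dimension of $K_Y+\oCL$); but $K_Y+\oCL=K$, which is nef and big by Proposition \ref{prop: positivity}(iv).

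There is no genuine obstacle here, since the corollary is a formal consequence of Proposition \ref{prop: positivity}. The only point deserving a word of care is the "only if" direction of (iii): it relies on the explicit identification, already performed inside the proof of the proposition, of the $(-1)$--curves on $W_p$ produced by an exceptional line of $\CL$.
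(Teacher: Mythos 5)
Your proof is correct and follows exactly the route the paper intends: the paper gives no written argument, simply declaring the corollary ``an immediate consequence'' of Proposition \ref{prop: positivity}, and your item-by-item deduction via $K_{W_p}=\sigma_p^*K_p$ (effective $\Rightarrow$ $\kappa\ge 0$, big $\Rightarrow$ general type, nef $\Leftrightarrow$ no $(-1)$--curve using the explicit $(-1)$--curves from the proof of part (iii), and $K=K_Y+\oCL$ nef and big for log general type) is precisely the intended chain of implications.
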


For an arrangement of curves $\mathcal C \subset \mathbb{P}^{2}_{\mathbb{C}}$ with only ordinary singularities,  we denote by $t_r$ the the number of points of multiplicity $r$ and we set 
$$ f_{0} = \sum_{r\geq 2}t_{r},\quad f_{1} = \sum_{r\geq 2}rt_{r}, \quad f_{2} = \sum_{r\geq 2}r^2t_{r}.$$
With the above  notation we have:
\begin{gather}\label{eq: chern-numbers}
\bar c_1(Y, \oCL)^2=9 -8k-5d+3f_1-4f_0,\\
\bar c_2(Y, \oCL)=3-2k-2d+f_1-f_0.\nonumber
\end{gather}

Now we are ready to present our Hirzebruch-type inequalities for conic--line arrangements. For $d=0$ next result recovers \cite[Theorem 3.1]{Tang}.

\begin{proposition}\label{HirCL}
Let $\mathcal{CL} = \{\ell_{1}, \dots, \ell_{d}, C_{1}, \dots, C_{k}\} \subset \mathbb{P}^{2}_{\mathbb{C}}$ be an arrangement of $d$ lines and $k\geq 3$ smooth conics admitting only ordinary singularities and such that $t_{d+k}=0$. Then one has
$$5k + t_{2} + t_{3} \geq d + \sum_{r\geq 5}(r-4)t_{r}.$$
\end{proposition}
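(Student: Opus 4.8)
The plan is to apply the logarithmic Bogomolov--Miyaoka--Yau inequality of Miyaoka \cite{M84} to the smooth surface $W_p$ for an odd prime $p$, and then let $p\to\infty$. By Corollary \ref{cor: positivity}(iv) the pair $(Y,\oCL)$ is of log general type, so the inequality
$$\bar c_1(Y,\oCL)^2 \le 3\,\bar c_2(Y,\oCL)$$
holds; using the explicit formulas \eqref{eq: chern-numbers} this reads
$$9-8k-5d+3f_1-4f_0 \le 3(3-2k-2d+f_1-f_0),$$
which simplifies to $2k \ge d-f_0$, a weaker inequality than what we want. To do better, the strategy is not to use $K=\bar c_1(Y,\oCL)$ directly but to run the BMY inequality on $W_p$ itself (equivalently, to use the orbifold/log inequality with the fractional boundary $\frac{p-1}{p}\oCL$ and the extra $\frac{2p-1}{p}$ coefficients on the $E_P$'s recorded in \eqref{eq: Kp}), take the limit as $p\to\infty$, and carefully account for the contribution of each type of singular point and of each curve. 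First I would write down $K_p^2$ and $c_2(W_p)$ (or the log Chern numbers of the orbifold pair) as functions of $p$, $d$, $k$ and the $t_r$'s, using that $W_p\to Y$ has degree $p^{d+k-1}$, that the ramification is along $\oCL$ with the coefficients in \eqref{eq: Kp}, and the standard formulas for Chern numbers of abelian covers from \cite{RP}.

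Next I would extract the BMY inequality $K_{W_p}^2 \le 3 c_2(W_p)$, divide by the degree $p^{d+k-1}$, and pass to the limit $p\to\infty$. In the limit the coefficient $\frac{p-1}{p}\to 1$ on the conics and lines and $\frac{2p-1}{p}\to 2$ on the exceptional curves, so one recovers an inequality of "log-orbifold" type that is genuinely sharper than the naive $K^2\le 3c_2$ because of the weight-$2$ contribution of the $E_P$'s. Writing everything in terms of $f_0,f_1$ and recalling $f_1=2d+\sum_{r}r\,t_r$-type bookkeeping — here one must separate the contribution of $2$-points, $3$-points, $4$-points and $\ge 5$-points, since the conics contribute to $f_1$ with weight $2$ per incidence — the inequality should collapse to exactly
$$5k+t_2+t_3 \ge d+\sum_{r\ge 5}(r-4)t_r.$$
It will be important here to use the hypothesis $k\ge 3$: the three conics $C_1,C_2,C_3$ are what make the limit argument produce the coefficient $5$ in front of $k$ (they enter through the positivity estimates in Proposition \ref{prop: positivity}(i)–(ii), where $\bar C_1,\bar C_2,\bar C_3$ are singled out), and without at least three conics the relevant divisor need not have the required effectivity/bigness and the BMY input degenerates.

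The main obstacle I anticipate is the precise bookkeeping of the Chern-number computation on $W_p$: one needs the correct formula for $c_2$ of the normalized cover in terms of the combinatorics of the branch divisor $\oCL$ on $Y$ (contributions of the transverse double points of $\oCL$, of the nodes created on $Y$ between an $E_P$ and the strict transforms meeting it, and of the Euler characteristics of the various ramification curves), and then the correct limiting behaviour of each term as $p\to\infty$. This is where the factor $5$ on $k$, the coefficients $1$ on $t_2,t_3$, and the $(r-4)$ on higher points all have to come out exactly right; a sign error or an off-by-one in the intersection numbers $\bar\ell_i^2$, $\bar C_j^2$, or $E_P\bar C_j$ would throw off the final constants. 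I would therefore set up the computation once and for all with the general abelian-cover Chern-number formulas of \cite{RP}, specialize to $G\cong\Z_p^{d+k-1}$ with the branch data fixed in Section \ref{sec: abel-prel}, and only then take the limit — rather than trying to guess the limiting inequality directly.
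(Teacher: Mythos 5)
There is a genuine gap at the heart of your plan: the limit $p\to\infty$ returns exactly the weak inequality that you correctly dismissed in your opening paragraph, not a sharper one. Dividing $3e(W_p)-K_{W_p}^2\ge 0$ by the degree $p^{d+k-1}$ and letting $p\to\infty$ yields precisely $3\bar c_2(Y,\overline{\mathcal{CL}})-\bar c_1(Y,\overline{\mathcal{CL}})^2\ge 0$, because $K_p\to K_Y+\overline{\mathcal{CL}}=\bar c_1(Y,\overline{\mathcal{CL}})$ and $e(W_p)/p^{d+k-1}\to e(Y\setminus\overline{\mathcal{CL}})=\bar c_2(Y,\overline{\mathcal{CL}})$; this is exactly how the paper proves Proposition \ref{prop: c2}. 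The ``weight $2$'' on the exceptional curves $E_P$ that you hope will make the limiting inequality genuinely sharper is already built into the log canonical divisor ($K_Y$ contains each $E_P$ with coefficient $1$ and the boundary adds another $1$), so nothing extra is gained: in terms of the invariants the limit inequality reads $2k-d+f_0\ge 0$, which does not imply $5k+t_2+t_3\ge d+\sum_{r\ge 5}(r-4)t_r$.

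What is actually needed --- and what the paper does --- is to work with a \emph{fixed} prime, namely $p=3$, the smallest one for which $W_p$ is of general type (Corollary \ref{cor: positivity}(ii); this, via the bigness of $K_3$ in Proposition \ref{prop: positivity}(ii), is where $k\ge 3$ enters, rather than in producing the coefficient $5$ as you suggest). For fixed $p$ the normalized Chern numbers carry correction terms of order $1/p$ and $1/p^2$ --- in the paper's computation, the summand $f_1-t_2$ in $e(W_3)/3^{d+k-3}$ and the summand $4k+d+f_1-f_0+t_2$ in $9K_3^2$ --- and these are precisely what produce the coefficient $5$ on $k$ and the terms $t_2+t_3$: one finds
$$\frac{3e(W_3)-K_{W_3}^2}{3^{d+k-3}}=4\left(5k-d-f_1+4f_0-t_2\right)\ge 0,$$
which is the claimed inequality after rewriting $f_1-4f_0=\sum_{r\ge 2}(r-4)t_r$. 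Your framework (the covers $W_p$, the formula \eqref{eq: Kp}, the BMY input) is the right one; the error is solely in passing to the limit, which discards the very terms you need.
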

\begin{proof}
We will use the notation introduced so far. We compute the Chern numbers of  the cover $\sigma_3\colon W_3\to Y$.  Since this is a rather standard computation, see \cite{BHH87, PSz2023}, we only outline it. 
We start with the Euler characteristic of $W$, namely 
$$e(W_{3})/3^{d+k-3}=9\cdot(3-2k-2d+f_{1}-f_{0})+6\cdot (d+k-f_{1}+f_{0}) + f_{1}-t_{2} = $$
$$27-12k-12d+4f_{1}-3f_{0}-t_{2}.$$

Next, we compute $K_{W_3}^{2}$. We have seen above that $K_{W_3}=\sigma_3^*K_3$, where
\begin{gather*}
K_3=K_Y+\frac{2}{3}(\sum_PE_P+\sum_{i=1}^d\bar \ell_i+\sum_{j=1}^k\bar C_j)\\
\end{gather*}
where the summation above is taken over all singular points $P$ of $\mathcal{CL}$ having multiplicity $\geq 3$. 
Following the path of \cite[Kapitel 1.3]{BHH87}, we get
$$9(K_3)^2= 9 (9-8k-5d + 3f_{1}-4f_{0})+ 12 (k+d-f_{1}+f_{0}) + 4k+d+f_{1}-f_{0}+t_{2}=$$
$$81-56k-32d+16f_{1}-25f_{0}+t_{2}.$$
By Corollary \ref{cor: positivity}, the surface $W_3$ is of general type,  therefore we can apply the Bogomolov--Miyaoka--Yau inequality, hence we have
$$\frac{3e(W_3)-K_{W_3}^{2}}{3^{d+k-3}}\geq 0.$$
By the previous computations, this is the same as:
$$3\cdot\bigg(27-12k-12d+4f_{1}-3f_{0}-t_{2}\bigg) -81 + 56k + 32d - 16f_{1} + 25f_{0}- t_{2}\geq 0,$$
which finally gives us
\[
5k+t_{2}+t_{3}\geq d + \sum_{r\geq 5}(r-4)t_{r},
\]
as claimed.
\end{proof}

The next result improves  \cite[Theorem 4.2]{PSz2023}:
\begin{proposition}\label{prop: miyaoka}
Let $\mathcal{CL} = \{\ell_{1}, \dots, \ell_{d}, C_{1}, \dots, C_{k}\} \subset \mathbb{P}^{2}_{\mathbb{C}}$ be an arrangement of $d\ge 3$ (or $d=0$) lines  and $k\ge 3$ smooth conics with   ordinary singularities.  Assume that:
\begin{itemize}
\item[(a)] there is no point $P\in \mathbb P^2$ that lies on all the lines and on no conic of $\mathcal{CL}$;
\item[(b)]  $\mathcal{CL}$ does not contain an exceptional line.
\end{itemize}
Then:
$$8k + t_{2} + \frac{3}{4}t_{3} \geq d + \sum_{r\geq 5}(2r-9)t_{r}.$$
\end{proposition}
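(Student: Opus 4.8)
The plan is to mimic the proof of Proposition \ref{HirCL}, but to apply Miyaoka's logarithmic BMY inequality instead of the ordinary one. Concretely, I would work with the $p=2$ cover $\sigma_2\colon W_2\to Y$ (using $d\ge 3$ or $d=0$, which is why hypothesis (a) is imposed — it guarantees the $\Z_2^{d+k-1}$-cover exists and that all the claims preceding Proposition \ref{prop: positivity} hold). By Proposition \ref{prop: positivity}(i) and hypothesis (b), $K_2$ is nef and effective, so by Corollary \ref{cor: positivity} the surface $W_2$ is minimal of non-negative Kodaira dimension. One should still verify it is of log general type / has the right positivity to make Miyaoka's inequality applicable; since $K_2\le K$ and $K=\bar c_1(Y,\oCL)$ is nef and big by Proposition \ref{prop: positivity}(iv), and $(Y,\oCL)$ is of log general type by Corollary \ref{cor: positivity}(iv), the logarithmic BMY inequality $\bar c_1(Y,\oCL)^2\le 3\,\bar c_2(Y,\oCL)$ (or a sharper covering version) is what I would invoke.

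The technical heart is the Euler-number computation for $W_2$ and the way the "missing" conics $C_1,C_2,C_3$ enter. Recall from the proof of Proposition \ref{prop: positivity} that for $p=2$ one writes $K_2$ by subtracting off $\tau^*(C_1+C_2+C_3)$; equivalently, the branch divisor used to build $W_2$ effectively involves only $\ell_1,\dots,\ell_d,\bar C_4,\dots,\bar C_k$ together with the exceptional curves $E_t$ weighted by $3-\mu_t$, where $\mu_t\le 3$ is the multiplicity of $P_t$ on $C_1+C_2+C_3$. So I would first compute $K_{W_2}^2=\sigma_2^*K_2\cdot\sigma_2^*K_2$ and $e(W_2)$ as appropriate multiples of $2^{d+k-\text{(something)}}$, expressing each in terms of $d,k,f_0,f_1,f_2$ and the correction terms coming from the three distinguished conics and the $\mu_t$. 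The key point — and the main obstacle — is to keep track of the local contributions of the singular points: a point of multiplicity $r$ on $\CL$ that lies on $0,1,2$ or $3$ of the distinguished conics contributes differently to $e(W_2)$ and to $K_{W_2}^2$, and one must sum these local data correctly and then re-express everything via the global $t_r$'s. This is exactly the kind of bookkeeping done in \cite{BHH87} and \cite{PSz2023}, so I would follow \cite[Kapitel 1.3]{BHH87} closely, but the presence of the three special conics means the formulas are not quite the symmetric ones used in Proposition \ref{HirCL}.

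Having obtained $3e(W_2)-K_{W_2}^2\ge 0$ (or the sharper form of Miyaoka's inequality that takes into account the rational curves coming from triple points, which is precisely what produces the coefficient $\tfrac34 t_3$ rather than $t_3$), I would divide by the appropriate power of $2$ and simplify. The $t_2$ contribution survives with coefficient $1$ because double points of $\CL$ are unramified in a controlled way; the $\tfrac34 t_3$ coefficient should arise from the fact that over a triple point the cover $W_2$ acquires rational $(-2)$-curves or $(-1)$-curves whose discrepancy in Miyaoka's refined inequality is $\tfrac14$ of the naive one, matching the "$\tfrac34$" in \cite[Theorem 4.2]{PSz2023}; I would import that refinement verbatim. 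After collecting terms the inequality should reduce to
\[
8k+t_2+\tfrac34 t_3\ \ge\ d+\sum_{r\ge 5}(2r-9)t_r,
\]
with the $r=4$ terms cancelling. The delicate comparison with \cite[Theorem 4.2]{PSz2023} — why this is an improvement — is not needed for the proof itself; it suffices to run the computation cleanly. I expect the only real risk of error to be in the coefficient bookkeeping for the $E_t$ and for the three distinguished conics, so I would double-check by specializing to $d=0$ and comparing with the known conic-arrangement inequality in \cite{Tang, PSz2023}.
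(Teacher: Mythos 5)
Your high-level strategy (the $p=2$ cover plus a refined Miyaoka inequality, with hypothesis (a) guaranteeing smoothness of $W_2$ and hypothesis (b) giving minimality) is the paper's, but two concrete points in your execution would derail the computation. First, the rewriting of $K_2$ via $-\frac12\tau^*(C_1+C_2+C_3)$ in the proof of Proposition \ref{prop: positivity}(i) is purely an algebraic manipulation used to exhibit effectivity; it does not alter the branch locus. The cover $W_2$ is branched on all of $\oCL$ --- every line, every conic and every exceptional curve, each with ramification index $2$ --- so the computations of $e(W_2)$ and $K_{W_2}^2$ are symmetric in the components and run exactly as in Proposition \ref{HirCL} with $p=3$ replaced by $p=2$. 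Your plan to track how many of three ``distinguished'' conics pass through each singular point rests on a false premise (the multiplicities $\mu_t$ never enter the Chern--number bookkeeping) and would produce wrong local contributions to $e(W_2)$.

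Second, you never pin down the inequality actually applied, and that is where every nonstandard coefficient comes from. The paper does not invoke the log BMY inequality on $(Y,\oCL)$, nor does it import the refinement from \cite[Theorem 4.2]{PSz2023} (which it cannot, since the proposition is claimed as an improvement of that result). It applies \cite[Cor.~1.3]{M84} to the smooth surface $W_2$ with nef canonical class: for disjoint smooth elliptic curves $A_i$ and disjoint rational $(-2)$--curves $B_j$ one has $3e(W_2)-K_{W_2}^2\ge\sum_i(-A_i^2)+\frac{9}{2}n$. The curves fed into this are the preimages of the $\tau$--exceptional curves: over a point of multiplicity $4$ one gets $2^{k+d-5}$ elliptic curves of self--intersection $-4$ (this is what handles the $r=4$ term), and over a triple point $2^{k+d-4}$ rational $(-2)$--curves, whose $\frac92$--contribution is the source of the coefficient $\frac34$ on $t_3$. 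Your mention of $(-1)$--curves is off the mark --- hypothesis (b) exists precisely to exclude them, so that $K_{W_2}$ is nef and Miyaoka's corollary applies. With these two corrections your outline becomes the paper's proof.
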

\begin{proof}
The proof is similar to that of Proposition \ref{HirCL}, but we take $p=2$ and apply a log version of the Bogomolov--Miyaoka--Yau inequality due to Miyaoka.

Consider the cover $\sigma_2\colon W_2\to Y$. By Corollary \ref{cor: positivity} the canonical class $K_{W_2}=\sigma_2^*K_2$ is nef.
Therefore if   $A_{1}, \ldots , A_{m}$, $B_1,\ldots, B_n$ are smooth disjoint  curves in $W_2$ such that the $A_i$ are elliptic and the $B_j$ are rational with $B_j^2=-2$, then the following inequality holds by  \cite[Cor.~1.3]{M84}:
\begin{equation}\label{eq: miyaoka}
3e(W) - K^{2}_W \geq \sum_{i=1}^{k}(-A_{i}^{2}) +\frac{9}{2} n.
\end{equation} 
In our situation, let $E$ be an  exceptional curve of $\tau$ mapping to a   point of $\mathcal{CL}$ of multiplicity $r$ of $\CL$: if $r=4$ then  $\sigma_2^{-1}(E)$ is the disjoint union of $2^{k+d-5}$ ($2^{k-4}$ if $d=0$) elliptic curves of self--intersection $-4$, while  if $r=3$ then $\sigma_2^{-1}(E)$ is the disjoint union of $2^{k+d-4}$ ($2^{k-3}$ if $d=0$) rational curves of self--intersection $-2$.
 So \eqref{eq: miyaoka} gives
$$3e(W) - K_{W}^2 \geq 4t_{4}\cdot 2^{k+d-5}+\frac{9}{2}t_3 \cdot 2^{k+d- 4}$$
if $d\ge 3$ and 
$$3e(W) - K_{W}^2 \geq 4t_{4}\cdot 2^{k-4}+\frac{9}{2}t_3 \cdot 2^{k-3}$$
if $d=0$. 
After simple manipulations  in either case we arrive at 
$$8k + t_{2} + \frac{3}{4}t_{3} \geq d + \sum_{r\geq 5}(2r-9)t_{r},$$
as claimed.
\end{proof}
\begin{remark}\label{rem: p=3}
Assume that $\mathcal{CL}$ contains no exceptional line and that $k\ge 3$. 
Then arguing as in the proof of Proposition  \ref{prop: miyaoka}, but for $p=3$, we can derive the following inequality, which slightly improves Proposition \ref{HirCL}:
$$5k + t_{2} +\frac 34 t_{3} \geq d + \sum_{r\geq 5}(r-4)t_{r}.$$
\end{remark}

The next result is proven in \cite[Proposition~6.2]{PSz2023} by combinatorial methods under the additional assumption that $t_{d+k-1}=0$.
\begin{proposition}\label{prop: c2}
Let $\mathcal{CL} = \{\ell_{1}, \dots, \ell_{d}, C_{1}, \dots, C_{k}\} \subset \mathbb{P}^{2}_{\mathbb{C}}$ be an arrangement of $d$ lines and $k\geq 3$ smooth conics admitting only ordinary singularities and such that $t_{d+k}=0$. 
Then
  $$3-2k-2d+f_{1}-f_{0}>0.$$
\end{proposition}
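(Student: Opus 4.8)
The quantity in question is exactly $\bar c_2(Y,\oCL)$ by the second formula in \eqref{eq: chern-numbers}, so the statement asserts that the logarithmic second Chern number of the pair $(Y,\oCL)$ is strictly positive. The natural strategy is to interpret $\bar c_2(Y,\oCL)$ topologically as the Euler characteristic $e\big(\pp^2\setminus\CL\big)$ of the open surface (this is the standard identification for log-smooth pairs; after the blow-up $\tau$ the complement of $\oCL$ in $Y$ is isomorphic to the complement of $\CL$ in $\pp^2$), and then to prove that this open Euler characteristic is positive. Since $\CL$ contains at least $k\ge 3$ conics and hence the complement is an affine-type open surface of log general type by Corollary \ref{cor: positivity}(iv), one expects $e>0$; the point is to make this quantitative and unconditional.

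The plan is to compute $e\big(\pp^2\setminus\CL\big)$ directly by inclusion–exclusion / stratification. Write $e(\pp^2)=3$, subtract the contribution of the open parts of the $d$ lines and $k$ conics, and add back the singular points with the correct multiplicities. A line minus its intersection points with the rest of the arrangement contributes $e(\mathbb P^1)-(\text{number of such points})$, and similarly each conic (also a $\mathbb P^1$) contributes $2-(\text{number of marked points on it})$; each $r$-fold point is then added back once. Collecting terms, one gets precisely
$$e\big(\pp^2\setminus\CL\big)=3-2k-2d+\sum_{r\ge 2}rt_r-\sum_{r\ge 2}t_r=3-2k-2d+f_1-f_0,$$
which re-derives \eqref{eq: chern-numbers} and reduces the claim to $e\big(\pp^2\setminus\CL\big)>0$.

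To establish positivity one can argue as follows. If the complement had $e\le 0$, then by the logarithmic Bogomolov–Miyaoka–Yau inequality applied to the minimal model of $(Y,\oCL)$ — or more elementarily, since $K=\bar c_1(Y,\oCL)$ is nef and big by Proposition \ref{prop: positivity}(iv), the inequality $\bar c_1^2\le 3\bar c_2$ forces $\bar c_1^2\le 0$, contradicting bigness. Indeed $K$ nef and big gives $\bar c_1(Y,\oCL)^2>0$, so $3\bar c_2(Y,\oCL)\ge \bar c_1(Y,\oCL)^2>0$, whence $\bar c_2(Y,\oCL)>0$, which is exactly the claim. This is clean and uses only results already proved in the excerpt (Proposition \ref{prop: positivity}(iv) and the log-BMY inequality of Miyaoka cited before Proposition \ref{HirCL}).

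The main obstacle is the case analysis hidden in invoking Proposition \ref{prop: positivity}(iv): that proposition is stated for a prime $p$ and in particular for $p=2$ requires the extra hypotheses ($d=0$ or $d\ge 3$, and no point on all lines and no conic), whereas Proposition \ref{prop: c2} is stated with no such restriction — only $k\ge 3$ and $t_{d+k}=0$. So the delicate part is to check that $K=\bar c_1(Y,\oCL)$ is nef and big \emph{without} the auxiliary assumptions, i.e. for $d=1,2$ and in the presence of a point lying on all lines. I expect this is handled either by noting that for $d=1,2$ the relevant exceptional-line phenomenon is even more constrained (with $k\ge 3$ conics any line meets the rest of the arrangement in at least two points, and the computation $K\Gamma=0$ from the proof of (iv) still goes through for its strict transform), or by using the $p=3$ cover — for which Proposition \ref{prop: positivity}(ii) already gives bigness of $K_3\le K$ unconditionally for $p=3$, hence bigness of $K$. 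Combining bigness of $K$ (immediate from $K\ge K_3$ and (ii)) with nefness of $K$ (the limiting argument in the proof of (iv), which only needs $K\Gamma\ge 0$ on components $\Gamma$ of $\oCL$ and reduces to the exceptional-line computation) yields $K$ nef and big in full generality, and then log-BMY finishes the proof as above.
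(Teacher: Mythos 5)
Your proposal is correct and follows essentially the same route as the paper: both deduce $K^2=\bar c_1(Y,\oCL)^2>0$ from Proposition \ref{prop: positivity}(iv) and then conclude via a Miyaoka--Yau type inequality $K^2\le 3\,\bar c_2(Y,\oCL)$. The only difference is that you invoke the logarithmic BMY inequality for the pair $(Y,\oCL)$ directly, whereas the paper derives exactly that inequality by applying the classical BMY to the smooth covers $W_p$ (of general type for $p\ge 3$) and passing to the limit $p\to\infty$; your concern about the extra $p=2$ hypotheses is reasonable but, as you yourself resolve, immaterial, since nefness and bigness of $K$ only require the $p=3$ cover and the limiting computation $K\Gamma\ge 0$.
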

\begin{proof} By Proposition  \ref{cor: positivity} we have $\bar c_1(Y,\CL)=K^2>0$. Since $K=\lim_pK_p$ we have $K_p^2>0$ for large $p$. Applying Bogomolov--Miyaoka--Yau inequality to $W_p$ for large $p$ we obtain:
\begin{equation}\label{eq: BMY}
\frac{3e(W_p)-K_{W_p}^2}{p^{d+k-1}}=\frac{3e(W_p)}{p^{d+k-1}}-K_p^2\ge 0.
\end{equation}
It is easy to check that $$\lim_p \frac{e(W_p)}{p^{d+k-1}}=e(Y\setminus \oCL)=\bar c_2(Y,\oCL),$$ hence  
taking the limit in \eqref{eq: BMY} and recalling \eqref{eq: chern-numbers} we obtain:
\begin{equation*}
3-2k-2d+f_{1}-f_{0} = \bar c_2(Y,\oCL)\ge\frac{K^2}{3}>0. \qedhere{}
\end{equation*}
\qedhere\end{proof}

\section{Bounds for the characteristic numbers of conic-line arrangements}\label{sec:bounds}
Now we are ready to prove the main result of our note.

\begin{theorem}
\label{charn}
Let $\mathcal{CL} \subset \mathbb{P}^{2}_{\mathbb{C}}$ be an arrangement of $d$ lines and $k\geq 3$ smooth conics such that $t_{k+d}=0$. Then $\gamma(\mathcal{CL}) < \frac{8}{3}$.
\end{theorem}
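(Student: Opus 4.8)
The plan is to show that the characteristic number $\gamma(\mathcal{CL}) = \bar c_1^2/\bar c_2$ is strictly less than $8/3$ by proving the equivalent inequality $3\bar c_1^2 - 8\bar c_2 < 0$, using the formulas \eqref{eq: chern-numbers} together with the Hirzebruch-type inequalities established in Section \ref{sec: abcovers}. Plugging in, $3\bar c_1^2 - 8\bar c_2 = 3(9-8k-5d+3f_1-4f_0) - 8(3-2k-2d+f_1-f_0) = 3 - 8k + d + f_1 - 4f_0$. Since $f_1 - 4f_0 = \sum_{r\ge 2}(r-4)t_r = -2t_2 - t_3 + \sum_{r\ge 5}(r-4)t_r$, the target inequality becomes
$$
3 - 8k + d - 2t_2 - t_3 + \sum_{r\ge 5}(r-4)t_r < 0,
$$
i.e.\ $8k + 2t_2 + t_3 > d + 3 + \sum_{r\ge 5}(r-4)t_r$. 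I would first handle the ``generic'' case where $\mathcal{CL}$ satisfies hypotheses (a) and (b) of Proposition \ref{prop: miyaoka}, so that the stronger bound $8k + t_2 + \tfrac34 t_3 \ge d + \sum_{r\ge 5}(2r-9)t_r$ applies. Comparing this with what we need, it suffices to check that $8k + t_2 + \tfrac34 t_3 \ge d + \sum_{r\ge 5}(2r-9)t_r$ implies $8k + 2t_2 + t_3 > d + 3 + \sum_{r\ge 5}(r-4)t_r$; since $2r-9 \ge r-4$ for $r\ge 5$ and the right-hand side has an extra $+3$, the gap is controlled once we know $t_2 \ge$ something, or more cleanly once we use Proposition \ref{prop: c2}, which gives $\bar c_2 = 3-2k-2d+f_1-f_0 > 0$, hence $\bar c_2 \ge 1$; then $3\bar c_1^2 - 8\bar c_2 = 3\bar c_1^2 - 8\bar c_2$ and we need $\bar c_1^2 < \tfrac83 \bar c_2$, which, since $\bar c_1^2$ and $\bar c_2$ are integers and $\bar c_2 \ge 1$, I would deduce from a non-strict inequality $3\bar c_1^2 \le 8\bar c_2$ plus a parity/integrality argument, or directly from the strict surplus coming from the $8k$ versus $5k$ discrepancy between Propositions \ref{prop: miyaoka} and \ref{HirCL}.

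Next I would dispose of the exceptional cases not covered by Proposition \ref{prop: miyaoka}. If $\mathcal{CL}$ contains an exceptional line, Example \ref{ex: exceptional} tells us there are at most three such lines, and a Cremona transformation centered at the relevant triple points turns $\mathcal{CL}$ into a \emph{line} arrangement $\mathcal L$ of degree $k+3$ with isomorphic complement; since $\gamma$ is a log-Chern slope it is a birational invariant of the pair (it depends only on $(Y,\oCL)$ up to the relevant equivalence, or directly on $\bar c_1^2(Y,\oCL)$ and $\bar c_2(Y,\oCL)$ which are invariants of the open surface), so Sommese's bound $\gamma(\mathcal L)\le 8/3$ for line arrangements applies; one then checks that equality forces $\mathcal L$ to be the dual Hesse arrangement, which has no exceptional line and $k+3=9$, contradicting the presence of an exceptional line in $\mathcal{CL}$ (or more simply, the dual Hesse arrangement is not of the special Cremona form). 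Hence $\gamma(\mathcal{CL}) = \gamma(\mathcal L) < 8/3$ strictly in this subcase. For the remaining degenerate possibility, $p=2$ with $d\ge 1$ but hypothesis (a) failing (a point on all lines and no conic) or $d=1,2$: here I would instead invoke Remark \ref{rem: p=3} when no exceptional line is present and $k\ge 3$, giving $5k + t_2 + \tfrac34 t_3 \ge d + \sum_{r\ge 5}(r-4)t_r$, which already suffices since $8k > 5k + 3$ when $k\ge 3$ — wait, one must be careful: we need $8k + 2t_2 + t_3 > d + 3 + \sum_{r\ge 5}(r-4)t_r$, and from $5k + t_2 + \tfrac34 t_3 \ge d + \sum_{r\ge 5}(r-4)t_r$ we get $8k + 2t_2 + t_3 \ge 3k + t_2 + \tfrac14 t_3 + d + 3 + \sum_{r\ge 5}(r-4)t_r > d+3+\sum_{r\ge 5}(r-4)t_r$ because $k\ge 3 > 0$. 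So Remark \ref{rem: p=3} handles every case with no exceptional line, and Example \ref{ex: exceptional}/Sommese handles the exceptional-line case.

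So the clean structure is: (1) if $\mathcal{CL}$ has an exceptional line, apply the Cremona reduction to line arrangements and Sommese's strict bound away from dual Hesse; (2) otherwise $k\ge 3$ and Remark \ref{rem: p=3} applies, giving $5k+t_2+\tfrac34 t_3 \ge d+\sum_{r\ge 5}(r-4)t_r$, from which $3\bar c_1^2 - 8\bar c_2 = 3-8k+d+f_1-4f_0 = 3 - 8k + d - 2t_2 - t_3 + \sum_{r\ge5}(r-4)t_r \le 3 - 8k + d - 2t_2 - t_3 + 5k + t_2 + \tfrac34 t_3 - d = 3 - 3k - t_2 - \tfrac14 t_3 \le 3 - 3k < 0$ since $k\ge 3$. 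The main obstacle I anticipate is not the generic inequality — that is a short linear-algebra chase — but rather making the exceptional-line reduction fully rigorous: one must confirm that $\bar c_1^2$ and $\bar c_2$ of the log pair are genuinely preserved under the Cremona transformation relating $\mathcal{CL}$ to the line arrangement $\mathcal L$ (equivalently that $\gamma$ is an invariant of the complement), and that the classification of arrangements with three exceptional lines in Example \ref{ex: exceptional} is complete; handling arrangements with one or two exceptional lines needs the observation that after the Cremona move the resulting $\mathcal L$ is still a bona fide simple-crossing line arrangement to which Sommese applies. I would double-check that in the equality case of Sommese's bound the dual Hesse arrangement indeed cannot arise from a conic–line arrangement with an exceptional line, so that the inequality is strict precisely as needed.
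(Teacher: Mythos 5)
Your central computation is exactly the paper's: with $\bar c_2(Y,\oCL)>0$ from Proposition \ref{prop: c2}, the inequality $\gamma(\CL)<\frac83$ reduces to $3-8k+d+f_1-4f_0<0$, and feeding in a Hirzebruch-type bound of the form $5k+t_2+t_3\ge d+\sum_{r\ge 5}(r-4)t_r$ yields $3-3k-t_2<0$, which fails for $k\ge 3$. The paper does precisely this with \emph{no case distinction}, because it invokes Proposition \ref{HirCL}, which requires only $k\ge 3$, ordinary singularities and $t_{d+k}=0$ --- in particular it holds whether or not $\CL$ contains an exceptional line (its proof uses the $p=3$ cover, and Corollary \ref{cor: positivity}(ii) needs no exceptional-line hypothesis). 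By instead citing Remark \ref{rem: p=3} and Proposition \ref{prop: miyaoka}, you import hypotheses you do not need and force yourself into a case analysis.

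The genuine gap is in your exceptional-line branch. The Cremona reduction of Example \ref{ex: exceptional} applies only to the arrangement with \emph{three} exceptional lines (three non-collinear base points, every conic through all three); an arrangement with exactly one or two exceptional lines admits no such reduction --- a quadratic Cremona map needs three base points, and the conics become lines only if each conic passes through all three, which there is no reason to expect. So that subcase is simply not covered by your argument. There is also a secondary issue you flag yourself: $\bar c_1^2(Y,\oCL)$ is not an invariant of the open surface $Y\setminus\oCL$ alone (it drops under blow-up at a smooth point of the boundary), so ``isomorphic complements'' does not by itself give $\gamma(\CL)=\gamma(\mathcal L)$; one would have to compare the two specific compactifications. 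The repair is immediate: drop the case split and use Proposition \ref{HirCL} throughout, which is exactly the paper's proof.
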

\begin{proof}
 Assume by contradiction that $\gamma(\mathcal{CL}) \geq \frac{8}{3}$, namely 
$$\frac{9-8k-5d+3f_{1}-4f_{0}}{3-2d-2k+f_{1}-f_{0}}\geq \frac{8}{3}.$$
By  Proposition \ref{prop: c2} (cf.  also \cite[Proposition 6.2]{PSz2023}), the denominator $3-2d-2k+f_{1}-f_{0}$ is strictly positive, so we get 
$$3+d+f_{1}-4f_{0}\geq 8k.$$
By the Hirzebruch--type inequality of Proposition \ref{HirCL}, we have 
$$5k-t_{2}\geq d+f_{1}-4f_{0},$$
which implies that
$$3+5k-t_{2} \geq 3+d+f_{1}-4f_{0} \geq 8k.$$
This gives us
$$3\geq 3k+t_{2},$$
but $k\geq 3$ and $t_{2}\geq 0$, so we arrive at a contradiction.
\end{proof}
\begin{corollary}
In the setting of Theorem \ref{HirCL}, we have
$$8k + 2t_{2}+t_{3} > 3+d + \sum_{r\geq 5}(r-4)t_{r}.$$
\end{corollary}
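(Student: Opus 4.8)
The plan is to read the corollary off directly from the proof of Theorem~\ref{charn}. Recall that in that proof one uses the strict bound $\gamma(\mathcal{CL})<\tfrac83$ together with the strict positivity of the denominator $\bar c_2(Y,\oCL)=3-2k-2d+f_1-f_0$ (guaranteed by Proposition~\ref{prop: c2}) to clear denominators in $\gamma<\tfrac83$ and obtain
$$3\,\bar c_1(Y,\oCL)^2 < 8\,\bar c_2(Y,\oCL).$$
First I would substitute the expressions from \eqref{eq: chern-numbers}; after cancelling common terms this collapses to
$$3 - 8k + d + f_1 - 4f_0 < 0,$$
i.e. $8k > 3 + d + f_1 - 4f_0$, the inequality being strict precisely because $\gamma<\tfrac83$ is strict and $\bar c_2>0$.

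The second and last step is pure bookkeeping: expand $f_1-4f_0=\sum_{r\geq 2}(r-4)t_r$. The $r=2$ summand is $-2t_2$, the $r=3$ summand is $-t_3$, the $r=4$ summand vanishes, and the remaining summands give $\sum_{r\geq 5}(r-4)t_r$. Substituting into $8k>3+d+f_1-4f_0$ and transposing $-2t_2-t_3$ yields
$$8k + 2t_2 + t_3 > 3 + d + \sum_{r\geq 5}(r-4)t_r,$$
which is exactly the assertion.

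There is essentially no obstacle here: the corollary is just Theorem~\ref{charn} rewritten in the combinatorial invariants $t_r$, and the only points to watch are that the strict inequality is preserved all the way through (it is, since it descends from the strict bound $\gamma<\tfrac83$ and a strictly positive denominator) and that the coefficients of $t_2,t_3,t_4$ in the expansion of $f_1-4f_0$ are tracked correctly. One can give the argument in three lines, citing Theorem~\ref{charn} and Proposition~\ref{prop: c2} for the inequality $8k>3+d+f_1-4f_0$ and then performing the substitution above.
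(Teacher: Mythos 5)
Your derivation is correct and is exactly the argument the paper leaves implicit: the corollary is stated without proof immediately after Theorem \ref{charn}, and the intended reading is precisely to combine $\gamma(\CL)<\tfrac83$ with the positivity of $\bar c_2(Y,\oCL)$ from Proposition \ref{prop: c2}, clear denominators using \eqref{eq: chern-numbers} to get $8k>3+d+f_1-4f_0$, and expand $f_1-4f_0=\sum_{r\ge 2}(r-4)t_r$. Your bookkeeping of the $t_2$, $t_3$, $t_4$ coefficients and the preservation of strictness are both correct, so there is nothing to add.
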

Now we provide a sharper bound on the characteristic numbers under the assumption that intersection points are only double and triple points.
\begin{proposition}
Let $\mathcal{CL} \subset \mathbb{P}^{2}_{\mathbb{C}}$ be an arrangement of $d$ lines and $k\geq 3$ smooth conics such that the only intersection points are ordinary double and triple points. Then
 $$\gamma(\mathcal{CL}) < \frac{5}{2}.$$
\end{proposition}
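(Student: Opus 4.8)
The plan is to argue by contradiction exactly as in the proof of Theorem~\ref{charn}, the point being that under the present hypothesis the combination $f_{1}-3f_{0}$ simplifies dramatically and the Hirzebruch-type inequality of Proposition~\ref{HirCL} is not even needed.

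First I would specialize the Chern-number formulas \eqref{eq: chern-numbers}. Since the only singularities are ordinary double and triple points, $f_{0}=t_{2}+t_{3}$ and $f_{1}=2t_{2}+3t_{3}$, so
$$\bar c_{1}(Y,\oCL)^{2}=9-8k-5d+2t_{2}+5t_{3},\qquad \bar c_{2}(Y,\oCL)=3-2k-2d+t_{2}+2t_{3}.$$
Next I would check that Proposition~\ref{prop: c2} applies. The hypothesis forces $t_{r}=0$ for $r\ge 4$, hence $t_{d+k}=0$ whenever $d+k\ge 4$; since $k\ge 3$, the only remaining possibility is $d=0$, $k=3$, which I would dispose of directly: for three smooth conics the Bézout count $\sum_{r}\binom{r}{2}t_{r}=3\cdot 4=12$ gives $t_{2}+3t_{3}=12$, so $\bar c_{1}(Y,\oCL)^{2}=\bar c_{2}(Y,\oCL)=9-t_{3}>0$ and $\gamma(\CL)=1<\tfrac52$. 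From now on $d+k\ge 4$, and Proposition~\ref{prop: c2} gives $\bar c_{2}(Y,\oCL)>0$.

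Now suppose for contradiction that $\gamma(\CL)\ge \tfrac52$. Since the denominator $\bar c_{2}(Y,\oCL)$ is positive, this is equivalent to $2\,\bar c_{1}(Y,\oCL)^{2}\ge 5\,\bar c_{2}(Y,\oCL)$. Substituting the formulas \eqref{eq: chern-numbers}, the terms involving $d$ cancel and one is left with
$$3-6k+f_{1}-3f_{0}\ge 0.$$
But $f_{1}-3f_{0}=\sum_{r\ge 2}(r-3)t_{r}=-t_{2}$ under our hypothesis, so this reads $t_{2}\le 3-6k$, which is absurd since $t_{2}\ge 0$ while $3-6k\le -15<0$ for $k\ge 3$. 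This contradiction proves $\gamma(\CL)<\tfrac52$.

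There is essentially no obstacle: once the Chern numbers are specialized the argument is a one-line linear manipulation. The only points that need care are (a) the applicability of Proposition~\ref{prop: c2}, which is why the small case $d=0$, $k=3$ has to be split off, and (b) the elementary identity $f_{1}-3f_{0}=-t_{2}$, which is exactly the place where the restriction to double and triple points is used.
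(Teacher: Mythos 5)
Your proof is correct and follows essentially the same route as the paper: specialize the log Chern numbers to $t_{2},t_{3}$, invoke Proposition \ref{prop: c2} for positivity of $\bar c_{2}(Y,\oCL)$, and reduce the assumed inequality $\gamma(\CL)\ge\tfrac52$ to $3\ge 6k+t_{2}$, which contradicts $k\ge 3$. The only difference is your explicit treatment of the edge case $d=0$, $k=3$, where $t_{d+k}=0$ is not automatic and Proposition \ref{prop: c2} does not directly apply --- a point the paper's proof passes over silently; your direct computation there is sound and makes the argument more complete.
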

\begin{proof}
Recall that
$$\bar{c}_{2}(Y,\overline{\mathcal{CL}})=3-2d-2k+t_{2} + 2t_{3}> 0$$ 
by Proposition \ref{prop: c2}. In order to prove our statement, assume by contradiction $\gamma(\mathcal{CL}) \geq \frac{5}{2}$. It means that
$$\frac{9-8k-5d+2t_{2}+5t_{3}}{3-2d-2k+t_{2}+2t_{3}}\geq \frac{5}{2}.$$
This gives us
$$18 - 16k - 10d +4t_{2} + 10t_{3} \geq 15 - 10d - 10k + 5t_{2} + 10t_{3},$$
so finally we arrive at
$$3 \geq 6k+t_{2}.$$
Since $k\geq 3$, we get a contradiction.
\end{proof}

Finally  we  to present a sharp lower bound on the characteristic numbers for arrangements consisting of conics and only having ordinary singularities. Our result is in the spirit of \cite[Proposition 3.4]{geo}.

\begin{theorem}
Let $\mathcal{C}$ be an arrangement of $k\geq 3$ smooth conics in the plane such that $t_{k}=0$.
Then 
$$\gamma(\mathcal{C}) :=\frac{9-8k+3f_{1}-4f_{0}}{3-2k+f_{1}-f_{0}} \geq \frac{4k^2-12k+9}{2k^{2}-4k+3}.$$
Moreover, the lower bound is achieved when our arrangement has only double points as intersections.
\end{theorem}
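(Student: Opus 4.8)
The plan is to reduce the claimed bound to a pointwise estimate on the multiplicities of the singular points, which then follows from an elementary analysis of one concave quadratic.

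First I would invoke Proposition \ref{prop: c2} (with $d=0$): it gives $\bar c_2(Y,\oCL)=3-2k+f_1-f_0>0$, which is exactly where the hypotheses $k\ge 3$ and $t_k=0$ are used. Since also $2k^2-4k+3>0$, the inequality $\gamma(\mathcal C)\ge\frac{4k^2-12k+9}{2k^2-4k+3}$ is equivalent to
$$(2k^2-4k+3)\,\bar c_1(Y,\oCL)^2-(4k^2-12k+9)\,\bar c_2(Y,\oCL)\ \ge\ 0.$$
Substituting the formulas of \eqref{eq: chern-numbers} and collecting terms, the left-hand side equals $2k^2f_1-(4k^2-4k+3)f_0-(8k^3-14k^2+6k)$.

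Next I would bring in Bézout's theorem. Since all singularities of $\mathcal C$ are ordinary, every pair of conics meets in $4$ distinct points, and counting incidences point by point gives $\sum_{r\ge 2}\binom r2 t_r=4\binom k2$, that is $\sum_{r\ge 2}r(r-1)t_r=4k(k-1)$. The key observation is that the constant $8k^3-14k^2+6k=2k(k-1)(4k-3)$ is precisely $\tfrac{4k-3}{2}\sum_{r\ge2}r(r-1)t_r$; after this substitution the quantity above becomes a sum over the singular points, namely $\sum_{r\ge2}\phi(r)t_r$ with
$$\phi(r)=2k^2r-(4k^2-4k+3)-\tfrac{4k-3}{2}\,r(r-1).$$
So it suffices to check $\phi(r)\ge 0$ for every multiplicity $r$ occurring in $\mathcal C$; by $t_k=0$ these satisfy $2\le r\le k-1$. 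Now $\phi$ is concave in $r$ with $\phi(2)=0$, and its other root is $r_*=\frac{4k^2-4k+3}{4k-3}$, which satisfies $r_*-(k-1)=\frac{3k}{4k-3}>0$. Hence $[2,k-1]$ lies between the two roots and $\phi\ge 0$ there, so $\sum_{r\ge2}\phi(r)t_r\ge 0$ and the bound follows. Since $\phi(r)>0$ for $3\le r\le k-1$, equality forces $t_r=0$ for all such $r$, i.e. $\mathcal C$ has only double points; for such an arrangement \eqref{eq: chern-numbers} gives $\bar c_1(Y,\oCL)^2=(2k-3)^2$ and $\bar c_2(Y,\oCL)=2k^2-4k+3$, and such arrangements exist for every $k\ge 3$ (take $k$ conics in general position), so the bound is sharp.

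I do not expect a genuine obstacle here: the only real idea is the rewriting in the third paragraph, which uses the Bézout relation to absorb the constant term and thereby turns the problem into the sign analysis of a single concave quadratic $\phi$ on $[2,k-1]$; the remaining work is routine bookkeeping with \eqref{eq: chern-numbers}.
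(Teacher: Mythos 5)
Your proposal is correct and follows essentially the same route as the paper: cross-multiplying (with Proposition \ref{prop: c2} supplying positivity of the denominator), invoking the B\'ezout count $\sum_{r\ge 2}\binom{r}{2}t_r=4\binom{k}{2}$ to absorb the constant term, and reducing to the nonnegativity of the very same quadratic in $r$ on $\{2,\dots,k-1\}$. The only difference is cosmetic: you verify the sign of that quadratic via concavity and its roots $2$ and $\frac{4k^2-4k+3}{4k-3}$, whereas the paper substitutes $k=r+h$ and factors it as $(r-2)\bigl(4h(h+r-1)-r+3\bigr)$.
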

\begin{proof}
Observe that if $\mathcal{C}$ admits only double intersection points, then
$$\gamma(\mathcal{C})= \frac{9-8k+2t_{2}}{3-2k+t_{2}} = \frac{4k^2-12k+9}{2k^{2}-4k+3}.$$
Now we want to show that the case when we have only double intersection points as singularities gives  a lower bound  for $\gamma(\mathcal{C})$. We wish  to show that
\begin{equation}
\frac{9-8k+3f_{1}-4f_{0}}{3-2k+f_{1}-f_{0}} \geq \frac{4k^2-12k+9}{2k^{2}-4k+3}.
\end{equation}
Observe that the above inequality is equivalent to
\begin{equation}
\label{main}
-8k^3+14k^2-6k + 2k^2f_{1}+(-4k^2+4k-3)f_{0} \geq 0.     
\end{equation}
Recall that the following combinatorial count holds
$$4\cdot \binom{k}{2} = 2(k^2-k) = \sum_{r\geq 2} \binom{r}{2}t_{r}.$$
Multiplying by $2$ the above formula we get
\begin{equation}
4k^2-4k  = \sum_{r \geq 2}r^{2}t_{r} - \sum_{r\geq 2} rt_{r}= f_{2} - f_{1}.
\end{equation}
and observe that
$$-8k^3+14k^2-6k = -2k(f_{2}-f_{1}) + \frac{3}{2}(f_{2}-f_{1}) = \bigg(-2k+\frac{3}{2}\bigg)f_{2} + \bigg(2k-\frac{3}{2}\bigg)f_{1}.$$
Plugging this into \eqref{main}, we obtain
$$\bigg(-2k + \frac{3}{2}\bigg)f_{2}+ \bigg(2k^2 + 2k-\frac{3}{2}\bigg)f_{1} +(-4k^2 + 4k - 3)f_{0} =$$
$$\sum_{r\geq 2}\bigg( \bigg(-2k+\frac{3}{2}\bigg)r^{2} + \bigg(2k^{2} + 2k - \frac{3}{2}\bigg)r + (-4k^{2} + 4k-3)\bigg)t_{r} \geq 0.$$
In order to finish the proof, we have to show that 
\begin{equation}
\label{check}
\bigg( \bigg(-2k+\frac{3}{2}\bigg)r^{2} + \bigg(2k^{2} + 2k - \frac{3}{2}\bigg)r + (-4k^{2} + 4k-3)\bigg) \geq 0
\end{equation}
for $r \in \{2, \ldots , k-1\}$ and suitably taken values of $k$, i.e., for $r \in \mathbb{N}_{\geq 2}$ the inequality must  hold with $k\geq r + 1$, since $t_{k}=0$. If we plug in $k = r + h$ with $h\geq 1$, then the left-hand side of \eqref{check} has the following form:
\begin{equation}
(r-2)\cdot(4h(h+r-1)-r+3).
\end{equation}
Since $r\geq 2$ and $h\geq 1$ we see that $(r-2)\cdot(4h(h+r-1)-r+3) \geq 0$, which completes our proof.
\end{proof}
Let us recall the world--record conic--line arrangements having the highest known characteristic number.
\begin{example}[Klein's arrangement of conics and lines]
In \cite{PR2019}, the second author with J. Ro\'e described in detail an interesting conic--line arrangement $\mathcal{CL}$ consisting of $21$ lines and $21$ conics, i.e., these curves are polars to Klein's quartic curve at the $21$ nodes of the associated Steinerian curve, and it has $42$ double points, $252$ triple points, and $189$ quadruple points. It is also worth noting that the group ${\rm PSL}(2;\mathbb{F}_{7})$ plays an important role in the geometry of this conic--line arrangement, and refer to \cite[Section 3]{PR2019} for more details. We can compute that
$$\gamma(\mathcal{CL}) = \frac{9 - 8\cdot 21 - 5\cdot 21 + 2\cdot 42 + 5\cdot 252 + 8\cdot 189}{3-2\cdot 21 - 2\cdot 21 + 42 + 2\cdot 252 + 3\cdot 189} \approx 2.512,$$  and to the best of our knowledge, this is the highest known value.
\end{example}
Finishing our note, let us formulate the following difficult conjecture which is strictly motivated by the prediction regarding characteristic numbers for both line and conic arrangements in the complex plane.
\begin{conjecture}
In the setting of Theorem \ref{charn}, for a fixed $\varepsilon >0$ there exist only finitely many conic--line arrangements $\mathcal{CL} \subset \mathbb{P}^{2}_{\mathbb{C}}$ with only ordinary singularities with $\gamma(\mathcal{CL}) > \frac{5}{2} + \varepsilon$.
\end{conjecture}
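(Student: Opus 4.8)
The plan is to show that the inequality $\gamma(\CL) > \tfrac52 + \varepsilon$ forces the discrete invariants $k$, $d$ and the multiplicities $t_r$ of $\CL$ to range in a finite set depending only on $\varepsilon$; since a bound on these invariants leaves only finitely many combinatorial types, this is the essential content of the statement. As a preliminary reduction I would dispose of the degenerate configurations excluded by the hypotheses of Proposition~\ref{prop: miyaoka}. If $\CL$ has an exceptional line then, by Example~\ref{ex: exceptional}, a Cremona transformation centred at the relevant triple points carries $\CL$ to an arrangement with fewer conics (or to a genuine line arrangement) having an isomorphic complement, hence the same $\gamma$; iterating this reduces to the case of no exceptional line, while the case of a point lying on all lines and no conic forces $k$ small and is handled directly. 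One may therefore assume hypotheses (a),(b) of Proposition~\ref{prop: miyaoka}.

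Next I would record the slope condition as an explicit linear inequality. Using $\bar c_2(Y,\oCL)>0$ from Proposition~\ref{prop: c2} and the formulas \eqref{eq: chern-numbers}, the bound $\gamma(\CL) > \tfrac52 + \varepsilon$ is equivalent to $f_1 - 3f_0 > 6k - 3 + 2\varepsilon\,\bar c_2(Y,\oCL)$. Since $f_1 - 3f_0 = \sum_{r\ge 4}(r-3)t_r - t_2$, a high slope forces an abundance of points of multiplicity $\ge 4$. On the other hand, the sharpened Hirzebruch-type inequalities of Proposition~\ref{prop: miyaoka} and Remark~\ref{rem: p=3} bound the high-multiplicity contribution $\sum_{r\ge 5}(2r-9)t_r$ from above by a linear expression in $k,d,t_2,t_3$. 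Combining these two facts confines $(k,d,t_2,t_3,\dots)$ to a cone, but by themselves the linear constraints do not yet bound $k$.

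The boundedness should come from the quadratic Bézout identity $\sum_{r\ge 2}\binom r2 t_r = \binom d2 + 2dk + 4\binom k2$, equivalently $f_2 - f_1 = d(d-1) + 4dk + 4k(k-1)$, whose right-hand side grows quadratically in $k$ and $d$. Feeding this into the Cauchy--Schwarz inequality $f_1^2 \le f_0 f_2$, together with the constraint $r \le d+k-1$ coming from $t_{d+k}=0$, should turn the quadratic growth of $f_2$ into a quadratic lower bound for $f_0$ (or for $\sum_r r^2 t_r$) that is, for $k$ large, incompatible with the linear upper bounds produced in the previous step. A successful balancing of the quadratic lower bounds against the linear upper bounds would yield $k \le k_0(\varepsilon)$; the inequalities then bound $d$ and each $t_r$ in turn, leaving finitely many combinatorial types.

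The main obstacle is the quadruple points. At $r=4$ both stated inequalities are essentially blind to $t_4$ (the coefficient $r-4$ vanishes and $2r-9$ becomes $-1$, with its term absorbed), yet $t_4$ enters $f_1-3f_0$ with positive coefficient and is genuinely large in the extremal known example, where Klein's arrangement has $t_4 = 189$. Thus the linear constraints alone cannot control the most relevant singularities, and the crude quadratic estimates above are unlikely to close the gap between $\tfrac52$ and the true supremum. I expect a rigorous proof to require a strictly stronger inequality tailored to quadruple and higher points --- for instance a log-orbifold Bogomolov--Miyaoka--Yau estimate with sharpened local contributions, or an analysis of the near-equality (ball-quotient) regime of the covers $W_p$ --- and, to reach the literal finiteness of arrangements rather than of combinatorial types, a further rigidity statement showing that each high-slope type admits only finitely many realizations. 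Pinning down such an inequality is precisely the difficulty that keeps the statement at the level of a conjecture.
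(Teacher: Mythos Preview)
The statement you are attempting to prove is a \emph{conjecture} in the paper, not a theorem: the authors explicitly label it as such and call it ``difficult'', and the paper contains no proof of it. So there is nothing to compare against.

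Your proposal is not a proof either, and to your credit you say so. You correctly observe in the final paragraph that the Hirzebruch-type inequalities available in the paper (Propositions~\ref{HirCL}, \ref{prop: miyaoka}, Remark~\ref{rem: p=3}) are insensitive to $t_4$ --- the coefficient of $t_4$ is zero or negative in each --- while $t_4$ contributes positively to $f_1-3f_0$ and hence to the slope. This is exactly the obstruction: the linear constraints you assemble do not bound $k$, and the Cauchy--Schwarz/B\'ezout step you sketch does not close the gap because it cannot distinguish a large number of quadruple points from a large number of high-multiplicity points. Your diagnosis that a genuinely new inequality (or a rigidity argument near the ball-quotient regime) would be needed is consistent with the authors' decision to leave the statement as a conjecture.

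One small correction to your preliminary reduction: the Cremona centred at the two intersection points of an exceptional line does not in general produce an arrangement with \emph{fewer} conics; in Example~\ref{ex: exceptional} the Cremona turns the conics into lines only in the very special case where all three lines are exceptional and the conics pass through all three base points. In general an exceptional line meets the rest of the arrangement at two points of multiplicity $\ge 4$, and blowing up those two points and contracting the strict transform of the line need not simplify the arrangement in the way you suggest. This does not affect your overall conclusion, since the argument was already acknowledged to be incomplete.
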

\section*{Acknowledgments}
 \noindent Rita Pardini is a member of Gnsaga-INdAM and she was partially supported by PRIN 2017SSNZAW\_004 and 2022BTA242. 
\section*{Conflict of Interests}
I declare that there is no conflict of interest regarding the publication of this paper.
\section*{Data Availability Statement}
We do not analyse or generate any datasets, because this work proceeds within a theoretical and mathematical approach.

\vskip 0.5 cm
Rita Pardini,
Dipartimento di Matematica, 
Universit\`a degli studi di Pisa, 
Largo Pontecorvo 5, 56127 Pisa, Italy. \\
\nopagebreak
\textit{E-mail address:} 
\texttt{rita.pardini@unipi.it}

\bigskip

Piotr Pokora,
Department of Mathematics,
University of the National Education Commission Krakow,
Podchor\c a\.zych 2,
PL-30-084 Krak\'ow, Poland. \\
\nopagebreak
\textit{E-mail address:} \texttt{piotr.pokora@uken.krakow.pl}
\bigskip
\end{document}